\documentclass{amsart}
\pdfoutput=1

\usepackage{microtype}
\usepackage[mathscr]{eucal}
\usepackage{mathtools}
\usepackage{amsthm}
\usepackage{enumitem}

\usepackage{hyperref}
\usepackage{mleftright}

\DeclareMathOperator{\inte}{int}
\DeclareMathOperator{\id}{id}

\theoremstyle{plain}
\newtheorem{theorem}{Theorem}[section]

\newtheorem{lemma}[theorem]{Lemma}
\newtheorem{corollary}[theorem]{Corollary}
\theoremstyle{remark}
\newtheorem{remark}[theorem]{Remark}
\theoremstyle{definition}

\begin{document}
	
\title[A mechanical characterization of CMC surfaces]{A mechanical characterization\\ of constant mean curvature surfaces}

\author{Matteo Raffaelli}
\address{School of Mathematics, Georgia Institute of Technology, Atlanta, Georgia 30332}
\email{raffaelli@math.gatech.edu}
\date{July 29, 2026}
\subjclass[2020]{Primary 53A05, 53A17; Secondary 53A10, 70B10}
\keywords{Constant mean curvature, constant principal curvatures, Darboux frame, extrinsic rolling, kinematics of rolling motions}

\begin{abstract} 
The \textit{speed} of a ball rolling without skidding or spinning on a surface $S$ is the length of the velocity of its center. We show that if the speed depends only on $p\in S$, then $S$ has constant mean curvature; and, conversely, that if the mean curvature of $S$ is constant and equal to $H> 0$, then either $S$ is a sphere or the ball of radius $1/H$ rolls on $S$ with direction-independent speed. It follows that the only surfaces where the speed is constant are subsets of planes, circular cylinders, and spheres.
\end{abstract}
\maketitle

\section{Introduction}

In an interesting paper~\cite{pamfilos1986}, Pamfilos proved that a compact orientable surface in $\mathbb{R}^3$ is a sphere if and only if, given a sufficiently small ball rolling freely without skidding on the surface, the ball’s center moves with constant speed for any initial condition; see also \cite{raffaelli2025}.

This result links geometry and dynamics. Its proof is based on the d'Alembert--Lagrange principle, which the author uses to derive the equations of motion. It should be noted that the ball is allowed to spin while rolling.

Inspired by Pamfilos' paper, here we obtain similar links between the geometry of a connected orientable surface $S$ and the \emph{kinematics} of rolling motions, using surface theory alone. Following Nomizu's viewpoint~\cite{nomizu1978}, a \textit{rolling (without skidding or spinning)} of one surface on another is described extrinsically by a one-parameter family of rigid motions that keeps the moving surface tangent to $S$ along a prescribed contact curve $\gamma$. The no-skid condition means that the instantaneous relative velocity vanishes at the point of contact, while the no-spin condition means that the instantaneous rotation does not include any component about the common normal. Thus, in the absence of equations of motion, to define a kinematic rolling one must prescribe the entire locus of contact $\gamma$ between the two surfaces, rather than merely an initial condition. 

To state our main result, fix $p\in S$, let $B_{r\neq0}$ be the ball of radius $\lvert r\rvert$ centered at $p + r N_p$, where $N$ is the surface unit normal, and let $\gamma$ be a unit-speed curve in $S$ such that $\gamma(0) = p$. Under a mild condition on $\gamma$, there exists a unique rolling (without skidding or spinning) of $B_r$ on $S$ such that $\gamma(t)$ is a point of contact with $B_r$; see Theorem~\ref{TH:rolling}. The \textit{speed} of $B_r$ while rolling on $S$ is the length of the velocity of its center. We say that $B_r$ rolls \textit{isotropically from $p$} if its initial speed while rolling along $\gamma$ does not depend on the choice of direction $v\coloneqq \gamma'(0) \in T_pS$.

\begin{theorem}\label{TH:main}
If for some $r$ the initial speed of $B_{r}$ is the same for three pairwise nonparallel directions $v \in T_pS$, then either $p$ is an umbilic point of  $S$, or the mean curvature of $S$ with respect to $N$ equals $1/r$ at $p$. Conversely, if $S$ has mean curvature $h \neq 0$ and is umbilic (resp., nonumbilic) at $p$, then $B_{r}$ rolls isotropically from $p$ for all $r \neq 1/h$ (resp., only for $r = 1/h$).
\end{theorem}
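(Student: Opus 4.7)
The plan is to compute the initial velocity of the center of $B_r$ explicitly and reduce the isotropy question to a short trigonometric observation. The center of $B_r$ at time $t$ is $c(t) = \gamma(t) + rN(\gamma(t))$, so differentiating and evaluating at $t=0$ gives
\[
c'(0) = v + r\,dN_p(v) = v - rW_p(v),
\]
where $W_p = -dN_p$ is the shape operator at $p$. The initial speed is thus $|v - rW_p(v)|$, which I need to study as a function of the direction $v \in T_pS$.

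First, I would choose an orthonormal basis $(e_1,e_2)$ of $T_pS$ that diagonalizes $W_p$, with principal curvatures $\kappa_1,\kappa_2$. Parametrizing unit tangent vectors by $v(\theta) = \cos\theta\,e_1 + \sin\theta\,e_2$, a direct expansion yields
\[
|c'(0)|^2 = (1-r\kappa_1)^2\cos^2\theta + (1-r\kappa_2)^2\sin^2\theta,
\]
which has the form $A + B\cos 2\theta$ with $B = \frac{1}{2}\big[(1-r\kappa_1)^2 - (1-r\kappa_2)^2\big]$. Three nonparallel directions correspond to three distinct values of $\theta$ modulo $\pi$, and a nonconstant function of this form takes any given value at most twice on a fundamental domain; hence equality at three nonparallel directions forces $B = 0$. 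This gives $(1-r\kappa_1)^2 = (1-r\kappa_2)^2$, so either $\kappa_1 = \kappa_2$ (umbilicity at $p$) or $r(\kappa_1+\kappa_2) = 2$, i.e., the mean curvature equals $1/r$ at $p$.

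For the converse, either of the two identified conditions forces $B = 0$, so the speed is automatically direction-independent; what remains is to ensure the rolling is actually defined in every direction, so that isotropy is a nonvacuous statement. I expect the ``mild condition on $\gamma$'' from Theorem~\ref{TH:rolling} to be essentially the invertibility of $I - rW_p$, equivalently $c'(0) \neq 0$ for every nonzero $v$. In the nonumbilic case with $r = 1/H$, the common value $(1-r\kappa_i)^2 = (\kappa_1-\kappa_2)^2/(\kappa_1+\kappa_2)^2$ is strictly positive, so $I - rW_p$ is invertible and the rolling is well-defined in every direction. In the umbilic case, $I - rW_p = (1-rh)I$ is invertible precisely when $r \neq 1/h$; at $r = 1/h$ we have $c'(0) \equiv 0$ and the rolling degenerates (the ball is osculating), which is exactly why the statement excludes this value.

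The main anticipated obstacle is making that last step precise: verifying that the ``mild condition'' of Theorem~\ref{TH:rolling} really does reduce, at a point, to the pointwise invertibility of $I - rW_p$, and that this is enough to guarantee the converse implications along admissible curves in every tangent direction. Once this is in hand, the rest of the proof is forced by the identity $c(t) = \gamma(t) + rN(\gamma(t))$ together with the eigenbasis expansion above.
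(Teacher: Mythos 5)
Your argument is correct, and it reaches the same trigonometric core as the paper --- the speed squared as a function of $\theta$ is an affine function of $\cos 2\theta$ (equivalently, of $\cos^2\theta$), whose coefficient factors as $r(\kappa_2-\kappa_1)\bigl(2-r(\kappa_1+\kappa_2)\bigr)$, and a nonconstant such function takes each value at most twice per period --- but you arrive at the speed formula by a genuinely different and more elementary route. The paper first invokes Theorem~\ref{TH:rolling} to write the angular velocity in the Darboux frame as $\tau_g\,\gamma' + (1/r-\kappa_n)\,N\times\gamma'$, crosses with $rN$ to get the center velocity, and then substitutes Euler's formulas for $\kappa_n(\theta)$ and $\tau_g(\theta)$; you instead observe that tangency alone forces the center to trace $c(t)=\gamma(t)+rN(\gamma(t))$, so $c'(0)=(I-rW_p)v$ and the speed is computed without any reference to the rolling kinematics. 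The two expressions agree, since $(1-r\kappa_n)^2+r^2\tau_g^2 = 1-2r\kappa_n+r^2(\kappa_1^2\cos^2\theta+\kappa_2^2\sin^2\theta) = (1-r\kappa_1)^2\cos^2\theta+(1-r\kappa_2)^2\sin^2\theta$. Your version also makes transparent that the existence condition of Theorem~\ref{TH:rolling} for a ball (namely $(\kappa_n,\tau_g)\neq(1/r,0)$) is exactly $c'\neq 0$, so that well-definedness of the rolling in every direction is the invertibility of $I-rW_p$; this replaces the paper's separate Lemma (which argues via Euler's theorem that $\kappa_n=h$ forces $\tau_g\neq 0$ at a nonumbilic point) with a one-line eigenvalue computation, $1-\kappa_i/h=\pm(\kappa_2-\kappa_1)/(2h)\neq 0$. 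The trade-off is that your shortcut is specific to the ball, whose center is the canonical point to track: the paper's angular-velocity formulation is what supports the generalization (mentioned in the introduction) to an arbitrary rolling surface $\widetilde S$, where the speed must be read off from $\omega_t$ itself. Within the scope of Theorem~\ref{TH:main} as stated, your proof is complete.
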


\begin{remark}
If $p$ is umbilic and the mean curvature is $h\neq0$, then there is no rolling of $B_{1/h}$ on $S$.  Indeed, in that case the normal curvature and geodesic torsion of $\gamma$ at $\gamma(0)$ coincide with the corresponding quantities on the ball, which violates the existence condition in Theorem~\ref{TH:rolling}. 
\end{remark}

We emphasize that, although we use the term ``ball''  and the symbol $B_r$, it is technically the sphere bounding $B_r$ that rolls on $S$; see section~\ref{sec:rolling}. We further note that, in our setting, intersections between $B_r$ and $S$ are allowed.

As a consequence of the above theorem, we obtain the following mechanical characterizations of nonminimal constant mean curvature (CMC) surfaces and spheres. 

\begin{corollary}\label{COR:main}
Suppose that $S$ is not totally umbilic. Then $S$ has nonzero constant mean curvature if and only if for some $r$ the ball $B_r$ rolls isotropically from every nonumbilic point of $S$.
\end{corollary}
\begin{proof}
Let $U$ denote the set of umbilic points of $S$, and suppose that $B_r$ rolls isotropically from every point of $S\setminus U$. Then the mean curvature $H$ equals $1/r$ on $S\setminus U$. By continuity, $H=1/r$ on the closure $\overline{S\setminus U}$. Now let $W$ be any connected component of $\inte(U)$. Since every point of $W$ is umbilic, $W$ lies in a sphere or a plane, so $H$ is constant on $W$. Because $\partial W\subset \overline{S\setminus U}$, continuity implies $H=1/r$ on $W$ as well, completing the proof.
\end{proof}

\begin{remark}
On a connected CMC surface, umbilic points are isolated unless the surface is totally umbilic \cite[p.~239]{woodward2019}.
\end{remark}

\begin{corollary}
Suppose that $S$ is compact, and choose $r$ such that $1/\lvert r\rvert > \max_{ p\in S, i=1,2 } \lvert\kappa_i(p)\rvert$, where $\kappa_1,\kappa_2$ are the principal curvatures. Then $S$ is a sphere if and only if $B_r$ rolls isotropically from every point of $S$. 
\end{corollary}

\begin{remark}
The condition on $r$ in the last corollary ensures that the normal curvature of any curve on (the boundary of) $B_r$ never coincides with that of $\gamma$, and that $(\kappa_1(p)+\kappa_2(p))r\neq 2$ for all $p\in S$.
\end{remark}

In the case of spheres, not only is the speed of $B_r$ constant with respect to the direction $v \in T_pS$, but also with respect to $p$. Applying Theorem~\ref{TH:main}, we can identify all surfaces with this property.

\begin{corollary}\label{COR:ConstSpeed}
The only connected orientable surfaces where for some $r$ the speed of the ball $B_r$ is constant are (subsets of) planes, circular cylinders, and spheres. In particular, $r=2R$ for a cylinder of radius $R$ oriented inward.
\end{corollary}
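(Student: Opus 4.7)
The plan is to apply Theorem~\ref{TH:main} pointwise and then invoke the classical classification of surfaces with constant principal curvatures. Since ``constant speed along every curve'' certainly implies direction-independent initial speed at every $p\in S$, Theorem~\ref{TH:main} gives, at each $p$, either umbilicity or $H(p)=1/r$. The remark following Theorem~\ref{TH:main} rules out umbilic points with $H=1/r$, so continuity of $H$ combined with connectedness of $S$ forces one of two alternatives: (a) $S$ is totally umbilic, or (b) $S$ is nowhere umbilic with $H \equiv 1/r$.

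In alternative~(a), a classical result identifies $S$ as a subset of a plane or a sphere. For alternative~(b), I would compute the initial speed directly from $c(t)=\gamma(t)+rN(\gamma(t))$, which yields $c'(0) = v - r S_p(v)$ with $S_p$ the shape operator and $v = \gamma'(0)$. Decomposing $v$ in principal directions and using $r=1/H$, a short computation gives the common speed $r\lvert\kappa_1-\kappa_2\rvert/2$. Since this is constant on $S$ and $\kappa_1+\kappa_2=2H=2/r$ is also constant, both principal curvatures are constant. The classical theorem classifying surfaces of constant principal curvatures (planes, spheres, circular cylinders) then forces $S$ to be a circular cylinder. Computing $H$ for a cylinder of radius $R$ with inward orientation ($\kappa_1=1/R$, $\kappa_2=0$) gives $r=1/H=2R$, as claimed.

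The main obstacle will be the continuity/connectedness argument showing that the umbilic and non-umbilic loci of $S$ are both open and closed, which uses continuity of $H$ together with the remark after Theorem~\ref{TH:main} in an essential way. Everything else reduces to a routine curvature computation in the non-umbilic case and to invoking the two classical results. A brief final check verifies the converse by direct computation: on a plane the speed is $\lvert\gamma'\rvert$, on a sphere with $\kappa_1=\kappa_2=\kappa$ it is $\lvert 1-r\kappa\rvert\lvert\gamma'\rvert$, and on the cylinder with $r=2R$ the above formula again gives $\lvert\gamma'\rvert$, all constant.
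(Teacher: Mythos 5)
Your proposal is correct and follows essentially the same route as the paper: apply Theorem~\ref{TH:main} pointwise, observe that on the nonumbilic locus $r=1/H$ forces the speed to equal $\lvert r\rvert\,\lvert\kappa_1-\kappa_2\rvert/2$, and invoke the classification of surfaces with constant principal curvatures. Your explicit open--closed connectedness argument separating the totally umbilic and nowhere umbilic cases (using the remark after Theorem~\ref{TH:main}) is just a slightly more careful version of what the paper leaves implicit, and your computation of $c'(0)=v-rS_p(v)$ reproduces the paper's formula $\omega_t\times rN(t)=(1-r\kappa_n)\gamma'-r\tau_g\,N\times\gamma'$.
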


Theorem~\ref{TH:main} (and Corollary~\ref{COR:ConstSpeed}) will be proved in section~\ref{sec:proof} on the basis of results of \cite{nomizu1978, raffaelli2018}, which we review in section~\ref{sec:rolling}. More explicitly, the proof relies on the fact that, as $B_r$ rolls on $S$ along $\gamma$, the angular velocity vector is completely determined by the Darboux frames of $\gamma$ and its ``anti-development'' $\widetilde\gamma$ on $B_r$ (the locus of contact on $B_r$, uniquely defined by the geodesic curvature of $\gamma$). This fact allows us to express the speed of $B_r$ solely in terms of $r$ and the Darboux curvatures of $\gamma$, and reduces the proof to a computation involving the principal curvatures of $S$.

It is worth noting that Theorem~\ref{TH:main} can be generalized by allowing the rolling surface $\widetilde{S}$ to be different from a ball, as long as we interpret the speed of $\widetilde S$ as the length of its angular velocity vector, and we position $\widetilde{S}$ so that $v\in T_pS$ is a fixed principal direction $\widetilde{e}_i$ of $\widetilde S$. Then a minor modification of the proof yields that $\widetilde S$ rolls isotropically from $p$ if and only if either $p$ is umbilic or the mean curvature of $S$ equals $\widetilde{\kappa}_i$, where $\widetilde{\kappa}_i$ is the principal curvature corresponding to $\widetilde{e}_i$. This more general statement implies that a developable surface can roll isotropically on a nonplanar minimal surface, while we already know that a ball cannot.

We conclude this introduction with some remarks on the literature. The problem of rolling two surfaces, or more generally two submanifolds, one on the other has a rich history. First considered by Chaplygin in 1897~\cite{chaplygin1897}, it was later studied by Nomizu~\cite{nomizu1978}, Bryant--Hsu~\cite[section~4]{bryant1993}, Sharpe~\cite[Appendix~B]{sharpe1997}, and Agrachev--Sachkov~\cite[Chapter~24]{agrachev2004}, among others. Of particular interest nowadays is the intrinsic viewpoint, first presented in \cite{godoy2012}. For a detailed historical perspective, discussing both classical and modern results, we refer the reader to the excellent survey \cite{chitour2014}.

\section{Kinematic rolling}\label{sec:rolling}
In this section we discuss some preliminaries. In the first part we follow rather closely \cite{nomizu1978}. Relevant textbook references are \cite[section~7.9]{woodward2019} and \cite[section~3.5]{robbin2022}.

Let $t \in [0,\ell]$. Consider a smooth one-parameter family $f_t$ of direct isometries $x\mapsto A_tx + b_t$ of $\mathbb{R}^3$ such that $f_0 = \id$, and let $X_t$ be the time-dependent vector field $x \mapsto f_t'(f_t^{-1}(x))$. We call $X_t$ the \textit{instantaneous motion} at instant $t$. A computation shows that 
\begin{equation*}
f_t'(f_t^{-1}(x)) = Q_t x +v_t,
\end{equation*}
where $Q_t\coloneqq A_t'A_T^T$ and $v_t \coloneqq b'_t-Q_t b_t$. In particular, the instantaneous motion is called
\begin{enumerate}
\item an \textit{instantaneous standstill} if $Q_t=0$ and $v_t=0$; 
\item an \textit{instantaneous translation} if $Q_t=0$ and $v_t\neq 0$; 
\item an \textit{instantaneous rotation} if $Q_t\neq 0$ and there is a point $x_0$, called \textit{center}, such that $f_t'(f_t^{-1}(x_0))=0$.
\end{enumerate}
Note that if $X_t$ is an instantaneous rotation, then there exists a unique vector $\omega_t$, the \textit{angular velocity} vector, such that 
\begin{equation*}
Q_t x =\omega_t\times x \quad \forall x\in \mathbb{R}^3.
\end{equation*}

We are now ready to define rolling. Let $S$ and $\widetilde{S}$ be connected $\mathcal{C}^2$ surfaces tangent to each other at a point $p\in S$, and let $\gamma\colon [0,\ell]\to S$ be a unit-speed curve such that $\gamma(0) = p$. A \textit{motion} of $\widetilde{S}$ on $S$ along $\gamma$ is a map $f_t$ with the property that $f_t(\widetilde S)$ is tangent to $S$ at $\gamma(t)$. A motion of $\widetilde{S}$ is said to be \textit{rotational} if $X_t$ is an instantaneous rotation with $\gamma(t)$ as center. A rotational motion is a \textit{rolling (without skidding or spinning)} if the angular velocity vector is tangent to $S$ at $\gamma(t)$.

To construct a rolling of $\widetilde S$ on $S$ along $\gamma$, we proceed essentially as in \cite{raffaelli2018}. Let $N$ be a unit vector field normal to $S$ along $\gamma$, and let $D_t$ be the matrix whose columns are the elements of the Darboux frame $(\gamma'(t), N(t)\times \gamma'(t), N(t))$ of $\gamma$. For any unit-speed curve $\widetilde\gamma \colon [0,\ell]\to \widetilde S$ such that $\widetilde\gamma(0) = p$ and $\widetilde\gamma'(0) =\gamma'(0)$, let $\widetilde{D}_t$ be defined analogously by the normal $\widetilde N$ that coincides with $N$ at $p$, so that $D_0=\widetilde{D}_0$. Then, setting
\begin{align*}
A_t &\coloneqq D_t\widetilde{D}_t^T,\\
b_t &\coloneqq \gamma(t)-A_t \widetilde{\gamma}(t),
\end{align*}
the family $f_t$ becomes a motion of $\widetilde{S}$ on $S$ along $\gamma$~\cite[Prop.~2.2]{raffaelli2018}. Since 
\begin{equation*}
f_t'(f_t^{-1}(x)) = Q_t(x-\gamma(t)),
\end{equation*}
we observe that $f_t$ is rotational when $Q_t\neq 0$. In particular, we may compute that $Q_t =D_t' D_t^T + D_t \widetilde{D}_t'^T \widetilde{D}_t D_t^T$. Next let
\begin{equation*}
\varLambda_t \coloneqq
\begin{bmatrix}
0 & \kappa_g(t) & \kappa_n(t)\\
-\kappa_g(t) & 0 & \tau_g(t) \\
-\kappa_n(t) & -\tau_g(t) & 0
\end{bmatrix},
\end{equation*}
where $\kappa_g\coloneqq \langle \gamma'', N\times \gamma' \rangle$, $\kappa_n \coloneqq \langle \gamma'', N\rangle$, and $\tau_g \coloneqq -\langle N', N\times \gamma'\rangle$ are the geodesic curvature, the normal curvature, and the geodesic torsion of $\gamma$. Since $D_t' = D_t\varLambda_t^T$ and $\varLambda_t$ is skew-symmetric, we obtain $Q_t = D_t(\widetilde{\varLambda}_t-\varLambda_t)D_t^T$. Hence
\begin{equation*}
\omega_t = (\tau_g(t)-\widetilde{\tau}_g(t))\gamma'(t)- (\kappa_n(t)-\widetilde{\kappa}_n(t))N(t)\times \gamma'(t) + (\kappa_g(t)-\widetilde{\kappa}_g(t))N(t).
\end{equation*} 

The following theorem concludes our discussion.
\begin{theorem}[\cite{nomizu1978,raffaelli2018}]\label{TH:rolling}
Suppose that $\widetilde S$ is complete, and let $\widetilde\gamma^\ast$ be the unique curve $\widetilde \gamma$ whose geodesic curvature coincides with $\kappa_g$. Then there exists a (unique) rolling of $\widetilde S$ on $S$ along $\gamma$ if and only if $\kappa_n$ and $\tau_g$ are never simultaneously equal to $\widetilde{\kappa}_n^\ast$ and $\widetilde{\tau}_g^\ast$. Moreover, if $\widetilde S$ rolls on $S$ along $\gamma$, then the angular velocity has components $(\tau_g-\widetilde{\tau}_g^\ast, \widetilde{\kappa}_n^\ast-\kappa_n, 0)$ in the Darboux frame of $\gamma$.
\end{theorem}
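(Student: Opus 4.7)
The plan is to combine the explicit formula for the angular velocity $\omega_t$ already derived in the excerpt for motions built from a choice of $\widetilde\gamma$, with the classical fact that a curve on a $\mathcal{C}^2$ surface is determined by its initial position, initial tangent, and prescribed geodesic curvature. My first step is to check that the construction recalled in the excerpt captures every motion of $\widetilde S$ on $S$ along $\gamma$. Given any such $f_t(x) = A_t x + b_t$, set $\widetilde\gamma(t) \coloneqq f_t^{-1}(\gamma(t))$; tangency of the two surfaces forces $A_t \widetilde N(t) = N(t)$, while the requirement that the motion be rotational with center $\gamma(t)$ yields, after differentiating $\gamma(t) = A_t\widetilde\gamma(t) + b_t$ and using $f_t'(\widetilde\gamma(t)) = 0$, the identity $A_t \widetilde\gamma'(t) = \gamma'(t)$. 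Since $A_t \in SO(3)$, these two identities together force $A_t\widetilde D_t = D_t$, which is precisely the construction.

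With the parametrization by $\widetilde\gamma$ exhausted, the angular velocity formula from the excerpt applies verbatim: $\omega_t$ has Darboux components $(\tau_g-\widetilde\tau_g,\,\widetilde\kappa_n-\kappa_n,\,\kappa_g-\widetilde\kappa_g)$. The no-spinning condition $\langle \omega_t,\, N(t)\rangle = 0$ therefore reads $\widetilde\kappa_g(t) = \kappa_g(t)$, and the standard intrinsic reconstruction theorem (existence and uniqueness for the ODE determining a unit-speed curve on $\widetilde S$ from its geodesic curvature, given its initial position and tangent) produces a unique solution, which is the curve $\widetilde\gamma^\ast$ in the statement. Substituting $\widetilde\gamma^\ast$ into the expression above gives the claimed Darboux components $(\tau_g-\widetilde\tau_g^\ast,\,\widetilde\kappa_n^\ast-\kappa_n,\,0)$ for $\omega_t$.

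To finish, I observe that the motion thus obtained is a rolling exactly when it is genuinely rotational at every instant, that is, $Q_t \neq 0$ for all $t$. Because $Q_t x = \omega_t \times x$, this is equivalent to $\omega_t$ being nonzero everywhere, which by the reduced formula fails at some $t$ precisely when $\tau_g(t) = \widetilde\tau_g^\ast(t)$ and $\kappa_n(t) = \widetilde\kappa_n^\ast(t)$ simultaneously. Negating and combining with the uniqueness of $\widetilde\gamma^\ast$ yields both the stated if-and-only-if criterion and the uniqueness of the rolling.

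The step I expect to require the most care is the opening one: showing that every motion of $\widetilde S$ on $S$ along $\gamma$ matches the construction, with no residual tangent-plane rotation left to fix $A_t$. The remaining ingredients---Darboux-frame bookkeeping, already carried out in the excerpt, and the classical intrinsic reconstruction of a curve from its geodesic curvature---are standard.
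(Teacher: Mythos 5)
Your proof is correct and follows essentially the same route as the paper: the derivation of $\omega_t$ from the Darboux frames, the no-spinning condition forcing $\widetilde\kappa_g=\kappa_g$ (hence $\widetilde\gamma^\ast$ via the intrinsic reconstruction theorem), and the existence criterion $\omega_t\neq 0$. Your opening step showing that every rotational motion arises from the construction $A_t=D_t\widetilde D_t^T$ is a welcome addition that the paper leaves implicit (deferring to the cited references), but it does not change the overall argument.
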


\begin{remark}
In the introduction we referred to the curve $\widetilde\gamma^\ast$ as the \emph{anti-development} of $\gamma$. When $\widetilde S$ is complete, standard ODE theory guarantees that $\widetilde\gamma^\ast$ exists and is unique; see \cite[Thm.~2.1]{branding2017} for a proof. We emphasize that, although $B_r$ is complete, for our purposes it suffices to roll along an arbitrarily short subarc of $\gamma$.
\end{remark}

\section{Proof of the main result}\label{sec:proof}
Here we prove Theorem~\ref{TH:main} and Corollary~\ref{COR:ConstSpeed}.

Let $h$ be the mean curvature of $S$ at $p$. We begin by showing that when $h\neq 0$ and $S$ is nonumbilic, the rolling of $B_{1/h}$ is well defined in a neighborhood of $p$.

\begin{lemma}
If $h\neq 0$ and $p$ is not an umbilic point, then for any unit vector $v \in T_pS$ and unit-speed curve $\gamma \colon [0, \ell] \to S$ such that $\gamma(0)=p$ and $\gamma'(0)=v$, there exists an $\varepsilon >0$ and a unique rolling of $B_{1/h}$ on $S$ along $\gamma\rvert_{[0,\varepsilon]}$.
\end{lemma}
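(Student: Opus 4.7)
My plan is to apply Theorem~\ref{TH:rolling} directly. With $r = 1/h$, the surface $\widetilde S = \partial B_r$ is a round sphere of radius $\lvert r\rvert = 1/\lvert h\rvert$, and I will show that the obstruction in that theorem, namely that $(\kappa_n, \tau_g)$ coincides with $(\widetilde\kappa_n^\ast, \widetilde\tau_g^\ast)$, cannot occur for $t$ close to $0$.

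First I would compute the Darboux invariants on $\widetilde S$. Since $\partial B_r$ is a sphere centered at $p + r N_p$, and the normal $\widetilde N$ is fixed by the requirement $\widetilde N(0) = N(0)$, a direct check gives that the shape operator of $\widetilde S$ is $(1/r)\,\id$, so that \emph{every} curve on $\widetilde S$ has normal curvature identically equal to $1/r = h$ and geodesic torsion identically $0$. In particular $\widetilde\kappa_n^\ast(t) \equiv h$ and $\widetilde\tau_g^\ast(t)\equiv 0$, independently of $\gamma$. Consequently, Theorem~\ref{TH:rolling} reduces to the requirement that $(\kappa_n(t),\tau_g(t))\neq (h,0)$ on the parameter interval.

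Next I would verify this at $t=0$. Let $A_p$ be the shape operator of $S$ at $p$, with principal curvatures $\kappa_1\neq \kappa_2$ (by nonumbilicity) and $(\kappa_1+\kappa_2)/2 = h$. Setting $v^\perp \coloneqq N_p\times v$, one has $\kappa_n(0) = \langle A_p v, v\rangle$ and $\tau_g(0) = \langle A_p v, v^\perp\rangle$. The equation $\tau_g(0)=0$ forces $v$ to be a principal direction of $A_p$, and then $\kappa_n(0)\in\{\kappa_1,\kappa_2\}$; if this value further equaled $h=(\kappa_1+\kappa_2)/2$ we would obtain $\kappa_1=\kappa_2$, contradicting the assumption. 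Hence $(\kappa_n(0),\tau_g(0))\neq (h,0)$, and by continuity of $\kappa_n$ and $\tau_g$ along $\gamma$ there is some $\varepsilon>0$ on which the same holds. Theorem~\ref{TH:rolling} then yields the unique rolling on $\gamma\rvert_{[0,\varepsilon]}$.

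The only nontrivial step is the first one, namely pinning down the correct sign in $\widetilde\kappa_n^\ast = 1/r$ under the convention $\widetilde N(0)=N(0)$, since it depends on whether one takes the inward or outward unit normal of $\partial B_r$ and on the sign of $r$. Once this orientation bookkeeping is done, the rest is a short shape-operator computation plus a continuity argument, so I do not expect genuine difficulty.
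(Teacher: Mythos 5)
Your proposal is correct and follows essentially the same route as the paper: both reduce the claim to the non-coincidence condition of Theorem~\ref{TH:rolling} by noting that every curve on $\partial B_{1/h}$ (with $\widetilde N(0)=N(0)$) has normal curvature $h$ and zero geodesic torsion, then verify $(\kappa_n(0),\tau_g(0))\neq(h,0)$ at the nonumbilic point and conclude by continuity. The only cosmetic difference is that the paper phrases the pointwise check via Euler's formulas (if $\kappa_n=h$ then $\theta=\pm\pi/4,\pm3\pi/4$, forcing $\tau_g\neq 0$), whereas you argue contrapositively with the shape operator ($\tau_g=0$ forces $v$ principal, so $\kappa_n\in\{\kappa_1,\kappa_2\}\neq h$); these are equivalent.
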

\begin{proof}
It is clear that any curve on $B_{1/h}$ (oriented by $N_p$) has normal curvature $h$ and zero geodesic torsion. Hence, by Theorem~\ref{TH:rolling}, we need to show that if $\kappa_n(0) = h$ for some $v\in T_{p}S$, then $\tau_g(0) \neq 0$. This will imply that $\tau_g(t) \neq 0$ in a neighborhood of $0$, as desired.

Let $\kappa_1 \geq \kappa_2$ be the principal curvatures of $S$ at $p$, and let $e_1,e_2$ be a positively oriented basis of principal unit vectors. Given $v \in T_pS$, denote by $\theta$ the angle between $v$ and $e_1$. Then, by Euler's theorem,
\begin{align}
\kappa_n(\theta) &= \kappa_1 \cos^2\theta + \kappa_2 \sin^2\theta,\label{eq:kappaN}\\
\tau_g(\theta) &= (\kappa_2-\kappa_1)\sin\theta\cos\theta \label{eq:tauG}.
\end{align}

Suppose that $\kappa_n(\theta)= 1/2(\kappa_1+\kappa_2)$. Then $\theta = \pm\pi/4$ or $\theta = \pm3\pi/4$, which implies $\tau_g(\theta)= \pm(\kappa_1 - \kappa_2)/2$. Hence $\tau_g(\theta)$ can only vanish when $p$ is an umbilic point, and the lemma is proved.
\end{proof}

\begin{proof}[Proof of Theorem~\textup{\ref{TH:main}}]
By Theorem~\ref{TH:rolling}, we know that as $B_r$ rolls on $S$ along $\gamma$, the angular velocity at instant $t$ is given by
\begin{equation*}
\omega_t = \tau_g(t) \gamma'(t) + (1/r - \kappa_n(t) ) N(t)\times \gamma'(t).
\end{equation*}
The velocity of the center of $B_r$ is thus
\begin{equation*}
\omega_t \times rN(t) = (1-r\kappa_n(t))\gamma'(t) - r\tau_g(t) N(t)\times \gamma'(t),
\end{equation*}
which has length squared
\begin{equation}\label{eq:speed}
(1-r\kappa_n(t))^2 + r^2\tau_g^2(t).
\end{equation}

In particular, for $t=0$, we may substitute equations \eqref{eq:kappaN} and \eqref{eq:tauG} into \eqref{eq:speed}. This gives
\begin{equation*}
\mleft(1-r\mleft(\kappa_1 \cos^2\theta + \kappa_2 \sin^2\theta\mright)\mright)^2 + r^2\mleft((\kappa_2-\kappa_1)\sin\theta\cos\theta\mright)^2\eqqcolon \ell(\theta).
\end{equation*}
Expanding the squares and using the Pythagorean identity, we obtain
\begin{equation*}
\ell(\theta)=1+r^2\mleft( \kappa_2^2 -\kappa_2^2\cos^2\theta + \kappa_1^2\cos^2\theta\mright) -2r\mleft(\kappa_1 \cos^2\theta + \kappa_2 -\kappa_2\cos^2\theta\mright),
\end{equation*}
which we rewrite as
\begin{equation}\label{eq:speed2}
\ell(\theta)=1+r^2\kappa_2^2 -2r \kappa_2 + r(\kappa_2-\kappa_1)(2-r(\kappa_1+\kappa_2))\cos^2\theta.
\end{equation}
Since $\cos^2\theta =k$ has at most four solutions on $[0,2\pi]$ and $\ell(\theta)=\ell(\theta+\pi)$, equation~\eqref{eq:speed2} shows that if the initial speed is the same for at least three pairwise nonparallel directions, then it must be independent of $\theta$. Moreover, this occurs if and only if
\begin{equation*}
(\kappa_2-\kappa_1)(2-r(\kappa_1 + \kappa_2))=0,
\end{equation*}
that is, either $p$ is umbilic or $r=1/h$, which is the desired conclusion.
\end{proof}

\begin{proof}[Proof of Corollary~\textup{\ref{COR:ConstSpeed}}]
By equation~\eqref{eq:speed}, it is clear that planes and spheres are examples. A circular cylinder of radius $R$, oriented inward, has principal curvatures $1/R$ and $0$; taking $r=2R$, equation~\eqref{eq:speed2} gives $\ell(\theta)=1$, so circular cylinders are examples as well.

Conversely, suppose that the speed of $B_r$ is constant and equal to $c>0$ on $S$. If $S$ is totally umbilic, then it is part of a plane or a sphere. We may therefore assume that the set $\Omega$ of nonumbilic points is nonempty. At each $p\in\Omega$, choose an orthonormal principal basis, and write $\kappa_1>\kappa_2$ for the principal curvatures at $p$. By Theorem~\ref{TH:main},
\begin{equation}\label{eq:sum}
\kappa_1+\kappa_2=\frac{2}{r}.
\end{equation}
Substituting $r=2/(\kappa_1+\kappa_2)$ into \eqref{eq:speed2}, we obtain
\begin{equation*}
c^2 =1 + \frac{4\kappa_2^2}{(\kappa_1 + \kappa_2)^2} - \frac{4\kappa_2}{\kappa_1 + \kappa_2} =\frac{(\kappa_1-\kappa_2)^2}{(\kappa_1+\kappa_2)^2}.
\end{equation*}
Hence, on $\Omega$,
\begin{equation}\label{eq:difference}
(\kappa_1-\kappa_2)^2 = \frac{4c^2}{r^2}.
\end{equation}
Since the right-hand side is a positive constant and the principal curvatures depend continuously on $p$, every limit point of $\Omega$ is nonumbilic; hence $\Omega$ is closed. Since $\Omega$ is also open and $S$ is connected, we conclude that $\Omega=S$. It follows from \eqref{eq:sum} and \eqref{eq:difference} that both principal curvatures are constant. Therefore, $S$ is part of a plane, a sphere, or a circular cylinder. Since $S$ is nonumbilic, only the last possibility occurs.
\end{proof}

\section*{Acknowledgments}
The author thanks Steen Markvorsen and Vicente Palmer for interesting discussions.

\bibliographystyle{amsplain}
\bibliography{biblio}
\end{document}